\documentclass[11pt]{article}

\usepackage[a4paper,margin=1in]{geometry}
\usepackage{amsmath,amssymb,amsthm,mathtools}
\usepackage{enumitem}
\usepackage{microtype} %improve readability
\usepackage[colorlinks]{hyperref} %colors to references
\usepackage{dsfont} %1-indicators

\newtheorem{theorem}{Theorem}[section]
\newtheorem{proposition}[theorem]{Proposition}
\newtheorem{lemma}[theorem]{Lemma}

\theoremstyle{remark}
\newtheorem{remark}[theorem]{Remark}

\newcommand{\N}{\mathbb{N}}
\newcommand{\Pp}{\mathbb{P}}
\newcommand{\E}{\mathbb{E}}

\title{Collision Properties in Discrete and Continuous Time on Bounded-Degree Graphs}
\author{Jhon Astoquillca \thanks{Institute of Mathematics, Statistics and Computer Science, University of S\~ao Paulo, \url{jhonastoquillca@ime.usp.br} }}
\date{}

\begin{document}

\maketitle

\begin{abstract}
We prove that, on every connected bounded-degree graph, the infinite collision property holds in discrete time if and only if it holds in constant-speed continuous time, and the same equivalence holds for the finite collision property. The proof is based on a pointwise comparison between the Green kernels of the synchronous discrete-time pair chain and the asynchronous pair chain obtained by Poissonization. The main estimate exploits the binomial interlacing of the coordinate updates together with a local binomial estimate. We then use Martin capacities and collision zero--one laws to relate this Green-kernel comparison to infinite visits of the diagonal.
\end{abstract}

\section{Introduction}
Let $G=(V,E)$ be an infinite connected graph, and let~$(X_n)_{n\geq 0}$ and~$(Y_n)_{n\geq 0}$ be two independent simple random walks on $G$. A collision occurs at time~$n$ whenever~$X_n=Y_n$. We say that~$G$ has the discrete infinite collision property if, when the two walks start from the same vertex, they collide infinitely often almost surely; when the number of collisions is almost surely finite, we say that~$G$ has the discrete finite collision property. The analogous notions for continuous-time simple random walks are defined in the natural way; the precise definition is given in~\eqref{def_collision_event_cont}. Throughout, we consider the constant-speed version: at each vertex, the walk waits an exponential time of rate $1$ and then takes one simple-random-walk step.

Collision properties arise naturally in systems of random walks and interacting particle systems. Through the classical duality between the voter model and coalescing random walks, they play an important role in the description of stationary measures of voter-type systems~\cite{Liggett85,Astoquillca26}. This makes it useful to understand which aspects of collision behavior are intrinsic to the spatial motion and which depend on the chosen parametrization of time. In discrete time the two walkers move simultaneously, whereas in continuous time only one walker moves at each jump time of the joint process. Thus, although the two models are built from the same underlying random-walk trajectories, their collision behavior need not a priori be the same.

This motivates the question of whether collision behavior depends on the parametrization of time. The issue already appears naturally in the study of random walks on random media. For instance, Chen and Chen~\cite{ChenChen10} proved the infinite collision property for continuous-time simple random walks on the supercritical open cluster of bond percolation in $\mathbb Z^2$, using the heat-kernel estimates of Barlow~\cite{Barlow04}, which were established in continuous time. They also raised the corresponding discrete-time question, noting that Barlow stated that analogous estimates should hold in discrete time. More generally, Chen, Wei and Zhang~\cite{Chen08} had shown that the infinite collision property is independent of the time parametrization for quasi-transitive graphs of subexponential growth, and this growth assumption was subsequently removed in the quasi-transitive setting; see~\cite[Theorem 3.37 and 3.38]{MontgomeryThesis}. On the other hand, the time parameter cannot be ignored on arbitrary graphs: Montgomery constructed a transient graph of unbounded degree for which the discrete- and continuous-time collision behaviors differ~\cite[Claim~3.9]{MontgomeryThesis}.

Our main result gives an affirmative answer to the question raised in~\cite{AstoMar2026} of whether discrete- and continuous-time collision properties coincide on bounded-degree connected graphs.

\begin{theorem}\label{thm:main}
Let $G$ be a bounded-degree connected graph. Then:
\begin{enumerate}
    \item $G$ has the discrete infinite collision property if and only if it has the continuous infinite collision property, 
    \item $G$ has the discrete finite collision property if and only if it has the continuous finite collision property.
\end{enumerate}

\end{theorem}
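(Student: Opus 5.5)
The plan is to compare the two pair chains on $V\times V$. The first is the synchronous discrete-time chain $Z_n=(X_n,Y_n)$. The second is the jump chain $W_k$ of the continuous-time pair, in which at each step one coordinate, chosen by a fair coin, makes a simple-random-walk step. Because the holding rates are constant, collisions of the continuous-time walks correspond exactly to visits of $W$ to the diagonal $D=\{(x,x)\}$. The continuous collision properties are therefore properties of $W$ alone, and it suffices to show that $Z$ and $W$ visit $D$ infinitely often under the same conditions. Write $g_Z(z,w)=\sum_n \Pp_z(Z_n=w)$ and $g_W(z,w)=\sum_k \Pp_z(W_k=w)$ for the Green kernels.

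\textbf{Step 1: Green-kernel comparison.} I would prove that there are constants $0<c\le C<\infty$, depending only on the maximal degree, with
\[
c\, g_Z(z,w)\le g_W(z,w)\le C\sum_{w'\sim w} g_Z(z,w') + C\,\mathds 1_{\{z=w\}}
\]
and a symmetric statement with the roles of $Z$ and $W$ exchanged. Here $w'\sim w$ means neighbours in the product graph, or within bounded distance. The starting point is that $W_k=(X_{B_k},Y_{k-B_k})$ with $B_k\sim\mathrm{Bin}(k,1/2)$ independent of the trajectories. Summing over $k$ gives
\[
g_W(z,w)=\sum_{m,\ell}\Pp(X_m=x)\Pp(Y_\ell=y)\,\Pp(B_{m+\ell}=m)\cdot(\text{weight}),
\]
while $g_Z$ is the diagonal sum $\sum_n p_n(x)p_n(y)$. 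The difficulty is the mismatch $m\ne\ell$. The local binomial estimate shows that $\Pp(B_{m+\ell}=m)$ is of order $(m+\ell)^{-1/2}$ when $|m-\ell|\lesssim\sqrt{m+\ell}$ and decays like a Gaussian beyond that. To relate $\sum_{|m-\ell|\lesssim\sqrt n}p_m(x)p_\ell(y)/\sqrt n$ to $\sum_n p_n(x)p_n(y)$, I would use the \emph{interlacing} property: along a fixed realisation, the asynchronous walk visits configurations that are within one step of the synchronous ones when the coordinate counts are interlaced. With bounded degree, each step changes the transition probabilities by at most a factor $\deg_{\max}$, and this produces the neighbour sums and the constants.

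I expect Step~1 to be the main obstacle. Heat kernels on general graphs are not regular in time, and parity effects (bipartite graphs) break naive comparisons. I would first try to handle this with lazy versions and the bounded-degree one-step comparison $p_{n+1}(x,y)\ge \deg_{\max}^{-1} p_n(x,y')$ for $y'\sim y$. The $\sqrt n$-window averaging would be absorbed by a Cauchy--Schwarz argument over the binomial window.

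\textbf{Step 2: from Green kernels to infinite visits.} For a transient Markov chain and a set $A$, the probability of hitting $A$ is comparable, up to a factor $2$, to the Martin capacity of $A$ computed with the kernel $g(z,w)/g(o,w)$, by the Benjamini--Pemantle--Peres criterion. Comparable Green kernels, including the neighbour-blurred version, which only alters kernels by bounded factors under bounded degree, therefore give comparable capacities of all tails $D\setminus B_R$. Visiting $D$ infinitely often with positive probability is equivalent to $\inf_R \mathrm{Cap}(D\setminus B_R)>0$, so it holds for $Z$ if and only if it holds for $W$. If either chain is recurrent, I would treat that case separately: both chains are then recurrent together, since $\sum g$ diverges for both by Step~1, and both hit the diagonal infinitely often a.s.

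\textbf{Step 3: zero--one laws.} Finally I would invoke the collision zero--one laws: on connected graphs, the probability of infinitely many collisions is either $0$ from all starting points or $1$ from all starting points, in both discrete and continuous time. This turns ``positive probability'' into ``almost surely'', which gives part~1. It also turns ``zero probability'' into the finite collision property, which gives part~2, since under the zero--one law the finite collision property is exactly the negation of the positive-probability statement.
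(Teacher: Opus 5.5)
\textbf{Gap in Step~1: the lower bound.} Your overall architecture matches the paper's: the binomial interlacing representation of the asynchronous jump chain, the Benjamini--Pemantle--Peres capacity of the diagonal, and zero--one laws. The gap is in Step~1, at the point you flag yourself, namely the lower bound $g_W\ge c\,g_Z$ on diagonal pairs. Write
\[
g_W((x,x),(y,y))=\sum_{m,\ell}\Psi(m,\ell)\,p_m(x,y)\,p_\ell(x,y),\qquad \Psi(m,\ell)=\binom{m+\ell}{m}2^{-(m+\ell)},
\]
where $\Psi(m,\ell)\asymp m^{-1/2}$ for $|m-\ell|\lesssim\sqrt m$. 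What you need is an averaged time-regularity estimate, uniform in $x,y$:
\[
\sum_{|\ell-m|\le\sqrt m}p_{2\ell+\epsilon}(x,y)\;\ge\; a\sqrt m\;p_{2m+\epsilon}(x,y),\qquad \epsilon\in\{0,1\}.
\]
None of the tools you list yields this.
\begin{itemize}
\item Keeping only configurations where the coordinate counts agree (or differ by one) keeps just the terms $m\approx\ell$. These carry weight $\Psi(m,m)\asymp m^{-1/2}$, so they lose exactly the $\sqrt m$ factor.
\item Iterating a one-step comparison such as $p_{n+1}(x,y)\ge \deg_{\max}^{-1}p_n(x,y')$, or $p_{n+2}(x,y)\ge P^2(y,y)\,p_n(x,y)$, across a window of length $\sqrt m$ costs a factor exponentially small in $\sqrt m$.
\item Cauchy--Schwarz bounds the cross terms $p_m p_\ell$ from above. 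It is the right tool for the easy direction: $g_W\le 2g_Z$ on the diagonal follows from $2ab\le a^2+b^2$ and $\sum_k\Psi(n,k)=2$, with no neighbour sums needed. It does not help with the lower bound.
\item Passing to lazy walks changes the process whose collisions are in question.
\end{itemize}
This gap is not local. The Martin kernel is a ratio $g(z,w)/g(o,w)$, so a one-sided Green bound gives no capacity comparison in either direction. Likewise, transferring recurrence of the pair chain from $Z$ to $W$ already uses the lower bound.

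\textbf{The missing idea.} Two steps of the non-lazy walk are already lazy. Bounded degree gives $P^2(x,x)\ge\alpha:=\deg_{\max}^{-1}$, so $P^2=\alpha I+(1-\alpha)S$ with $S$ stochastic and commuting with $P$. Hence
\[
p_{2\ell+\epsilon}=\sum_k \Pp(\mathrm{Bin}(\ell,1-\alpha)=k)\,\bigl(S^kP^\epsilon\bigr),
\]
with nonnegative coefficients. The matrix inequality above then reduces to the scalar inequality
\[
\sum_{|\ell-m|\le\sqrt m}\Pp(\mathrm{Bin}(\ell,p)=k)\;\ge\; a_2\sqrt m\,\Pp(\mathrm{Bin}(m,p)=k),
\]
uniformly in $k$. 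This follows from unimodality of $\ell\mapsto \Pp(\mathrm{Bin}(\ell,p)=k)$ and the local binomial estimate. Pairing only times of equal parity also removes the bipartite obstruction you were worried about. Without this or a substitute, Step~1 is unproved, and Step~2 has nothing to feed on.

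\textbf{A lesser point on Step~3.} You invoke the continuous-time zero--one law as known, but it needs proof. The paper handles the transient case by noting that, on a bounded-degree graph, the expected number of collisions is at most a constant times $\sum_n p_{2n}(\rho,\rho)<\infty$. It handles the recurrent case by tail triviality, via i.i.d.\ excursions and the Hewitt--Savage law.
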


\subsection{Organization of the paper and proof strategies}
Our approach is based on a comparison between the Green kernels of the joint process. The particular form of continuous time pair provides a useful binomial representation. This representation allows us to compare the Green kernel of the continuous-time skeleton with the Green kernel associated with the synchronous discrete-time pair. One direction of this comparison follows directly from the binomial representation. The reverse direction is more delicate: retaining only the terms in which the two coordinates have made the same number of jumps loses a factor of order $\sqrt n$. To recover this loss, we show that enough transition mass is distributed over a suitable window of nearby times. This yields the Green-kernel comparison needed for the collision argument.

To turn these potential-theoretic estimates into statements about infinitely many collisions, we use the Martin-capacity estimates of Benjamini, Pemantle and Peres~\cite{BenjaminiPemantlePeres1995}. Their theorem relates the probability that a transient Markov chain visits an arbitrary subset infinitely often to the asymptotic Martin capacity of that subset. Applying this criterion to the diagonal $\Delta$, together with the Green-kernel comparison and appropriate zero--one laws for collisions, allows us to transfer the infinite collision property between discrete- and continuous-time random walks.

The remainder of the paper is organized as follows. In Section~\ref{ss_MarkovChain}, we develop the Markov-chain tools needed for the proof. We first establish zero--one laws for discrete- and continuous-time collision events, and then recall the Martin-capacity criterion of Benjamini, Pemantle and Peres and use it to show that a pointwise comparison of the Green kernels of the two pair chains yields a comparison of their infinite collision probabilities. We then prove Theorem~\ref{thm:main} by combining this criterion with a Green-kernel comparison for the synchronous and asynchronous pair chains. The proof of this comparison, which is the main technical ingredient of the argument, is given separately in Section~\ref{ss_lem:green-comparison} and relies on local binomial estimates. Finally, in Section~\ref{ss_further} we discuss the limitations of the method for more general time parametrizations, in particular variable-speed random walk.

\section{Preliminaries and Markov chain results}\label{ss_MarkovChain}
Given a set or event~$A$, we denote by~$\mathds{1}_A$ its indicator function and by~$|A|$ its cardinality. Throughout this section, let~$\mathsf S$ be a nonempty countable state space.

\subsection{0-1 law for reversible Markov chains}
Let~$P$ be an irreducible Markov kernel on~$\mathsf S$, and let~$(X_n)_{n \ge 0}$ and~$(Y_n)_{n \ge 0}$ be two independent copies of the corresponding discrete-time Markov chain. We define the \emph{discrete infinite collision event},
\begin{equation}\label{def_collision_event_disc}
\mathcal E^P_\mathrm{disc} := \big\{ |\{n \ge 0: X_n=Y_n  \}| = \infty \big\}
\end{equation}
We say that~$Q = \{q(x,y): x,y \in \mathsf  S\}$ is a~$Q$-matrix if the off-diagonal entries~$q(x,y), x \neq y$, are non-negative real numbers and
$$ q(x,x)=-\sum_{y\in \mathsf S \setminus \{x\}}q(x,y), \quad
q(x):=-q(x,x)<\infty, \quad \text{for all } x \in \mathsf S. $$
Here~$q(x)$ denotes the total jump rate from~$x$. Let~$Q$ be an irreducible~$Q$-matrix on~$\mathsf S$, and let~$(X_t^{\mathrm{cont}})_{t\geq0}$ and~$(Y_t^{\mathrm{cont}})_{t\geq0}$ be two independent copies of the corresponding continuous-time Markov chain. We define the \emph{continuous infinite collision event},
\begin{equation}\label{def_collision_event_cont}
\mathcal E^Q_\mathrm{cont} := \big\{ |\{t \ge 0: X^\mathrm{cont}_{t^-} \neq  Y^\mathrm{cont}_{t^-}\text{ and } X^\mathrm{cont}_t= Y^\mathrm{cont}_t\}| = \infty \big\}.
\end{equation}

\begin{proposition}\label{prop_0-1_law}
The following statements hold.
\begin{enumerate}
    \item Let~$P$ be an irreducible Markov kernel on $\mathsf S$, reversible with respect to a measure $\pi$ satisfying
    \[ \sup_{x \in \mathsf S}\pi(x) <\infty.\]
    Then~$\Pp(\mathcal E^P_\mathrm{disc}) \in \{0,1\}$ for every common starting state.  
    \item Let~$Q$ be an irreducible~$Q$-matrix on~$\mathsf S$, reversible with respect to a measure~$\pi$ satisfying
    \[ \sup_{x \in \mathsf S} \pi(x) <\infty \qquad \text{and} \qquad \sup_{x \in \mathsf S}q(x) <\infty .\]
    Then~$\Pp(\mathcal E^Q_\mathrm{cont}) \in \{0,1\}$ for every common starting state. 
\end{enumerate}
\end{proposition}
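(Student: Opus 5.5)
The plan is to run a martingale (L\'evy $0$--$1$) argument for the pair chain. Reversibility and the bound on $\pi$ will supply the uniformity this argument needs, and part (2) will be reduced to the same argument after uniformization.

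\textbf{Setting up part (1).} Let $Z_n=(X_n,Y_n)$ be the product chain on $\mathsf S\times\mathsf S$ with kernel $P\otimes P$; it is reversible with respect to $\pi\otimes\pi$. Put
\[
h(x,y):=\Pp_{(x,y)}(\mathcal E^P_{\mathrm{disc}}).
\]
Since $\mathcal E^P_{\mathrm{disc}}$ is shift-invariant, $h$ is bounded and $P\otimes P$-harmonic. By L\'evy's $0$--$1$ law, $h(Z_n)\to\mathds 1_{\mathcal E^P_{\mathrm{disc}}}$ almost surely. Two reductions follow.
\begin{enumerate}
\item If the diagonal is visited only finitely often (for instance if the pair chain is transient and the diagonal "escapes"), then on $(\mathcal E^P_{\mathrm{disc}})^c$ we have $h(Z_n)\to 0$.
\item On $\mathcal E^P_{\mathrm{disc}}$ there are infinitely many collision times $\tau_k$, and $u(Z_{\tau_k})\to 1$, where $u(z):=h(z,z)$.
\end{enumerate}
So it suffices to exclude $0<h(x,x)<1$.

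\textbf{Last-collision decomposition.} Let $e(z):=\Pp_{(z,z)}(X_n\neq Y_n \ \forall n\ge 1)$ be the escape probability from the diagonal point $(z,z)$. Decomposing according to the last collision,
\[
1-h(x,x)=\sum_{z}g(x,z)\,e(z),\qquad g(x,z):=\sum_{n\ge0}p_n(x,z)^2 .
\]
Reversibility gives $p_n(x,z)=\pi(z)p_n(z,x)/\pi(x)$, hence
\[
g(x,z)=\frac{\pi(z)^2}{\pi(x)^2}\,g(z,x).
\]
Moreover
\[
\sum_z p_n(x,z)^2=\sum_z p_n(x,z)p_n(z,x)\frac{\pi(z)}{\pi(x)}\le \frac{\sup\pi}{\pi(x)}\,p_{2n}(x,x).
\]
Thus the hypothesis $\sup\pi<\infty$ lets one compare the expected number of collisions from $(x,x)$ with those from $(z,z)$ up to the factor $\sup\pi/\pi(x)$.

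\textbf{Main obstacle: a uniform estimate on the diagonal.} I expect the hardest step to be the following. One needs to show that if $u(z_k)\to1$ along a sequence of diagonal points, then $u\equiv1$, or equivalently that $e(z)$ is bounded below by a fixed multiple of $1-h(x,x)$ uniformly in $z$. My first attempt is to apply the Green symmetry above to the Markov property at the collision times. Under $\Pp_{(x,x)}$, the conditional probability that no further collision occurs after $\tau_k$ is $e(Z_{\tau_k})$. Using reversibility to reverse the path between $x$ and $z$, one should get an estimate of the form
\[
1-h(x,x)\le C\,\frac{\sup\pi}{\pi(x)}\,\bigl(1-u(z)\bigr)
\]
for every $z$ visited with positive probability. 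Letting $z=Z_{\tau_k}$ and $k\to\infty$ on $\mathcal E^P_{\mathrm{disc}}$ then forces $h(x,x)=1$ whenever $\Pp(\mathcal E^P_{\mathrm{disc}})>0$. If the direct comparison fails, the fallback is to condition on the entire path of $Y$. This reduces the problem to a single reversible chain $X$ hitting a moving target infinitely often. One would then use that this quenched event is $X$-invariant, and apply the same bounded-$\pi$ Green comparison to control the quenched harmonic function.

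\textbf{Part (2).} Since $\bar q:=\sup_x q(x)<\infty$, uniformize. Write each continuous chain as a discrete lazy chain with kernel $P_Q:=I+Q/\bar q$, run at the jump times of a Poisson process of rate $\bar q$. Then $P_Q$ is still reversible with respect to $\pi$. The pair $(X^{\mathrm{cont}},Y^{\mathrm{cont}})$, observed at the jump times of the merged rate-$2\bar q$ Poisson clock, is a discrete Markov chain on $\mathsf S\times\mathsf S$: at each step one uniformly chosen coordinate moves by $P_Q$. This chain is reversible with respect to $\pi\otimes\pi$, and $\mathcal E^Q_{\mathrm{cont}}$ is exactly the event that it enters the diagonal from off the diagonal infinitely often, which is again a shift-invariant event. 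The argument of part (1), with $h$, $u$, $e$ and $g$ defined for this asynchronous chain, then carries over verbatim.
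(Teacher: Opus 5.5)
There is a genuine gap. The step you label the main obstacle is the bound $1-h(x,x)\le C\,\frac{\sup\pi}{\pi(x)}\,(1-u(z))$, uniformly in $z$. It is never proved, and it is essentially the proposition itself: it holds a posteriori only because $u$ turns out to be constant on the diagonal. Reversibility gives you only the symmetry $\pi(x)^2g(x,z)=\pi(z)^2g(z,x)$. That compares expected collision counts, not escape probabilities.

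The obvious Markov bound $1-u(z)\ge \Pp_{(z,z)}(\text{hit }(x,x))\,(1-u(x))$ does not help either. These hitting probabilities tend to $0$ along the trajectory whenever the pair chain is transient, and this happens even when there are infinitely many collisions (two walks on $\mathbb Z^2$). Moreover, both outcomes occur among recurrent chains: infinitely many collisions on $\mathbb Z^2$, finitely many on the comb over $\mathbb Z$. So no argument driven only by reversibility and $\sup\pi<\infty$ can settle the recurrent case. Your fallback again invokes ``the same bounded-$\pi$ Green comparison'', so it does not supply the missing ingredient.

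What is missing is the split according to whether the single chain is transient or recurrent, which is how the paper argues.

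\begin{itemize}
\item \textbf{Transient case.} Sum over $n$ the inequality you already wrote, $\sum_z p_n(x,z)^2\le\frac{\sup\pi}{\pi(x)}p_{2n}(x,x)$. This gives $\E[\#\text{collisions}]\le\frac{\sup\pi}{\pi(x)}\sum_n p_{2n}(x,x)<\infty$, hence $\Pp(\mathcal E^P_{\mathrm{disc}})=0$. This is the only place where reversibility and bounded $\pi$ enter.
\item \textbf{Recurrent case.} The paper uses tail triviality for each walk: Orey's theorem in discrete time, and i.i.d.\ excursions plus Hewitt--Savage in continuous time. Independence then shows that $\bigcap_n\sigma(\mathcal T^X_n,\mathcal T^Y_n)$, which contains the collision event, is trivial.
\end{itemize}

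Your idea of conditioning on $Y$ would also work in the recurrent case, but only with tail triviality as the engine. For a fixed path $y$, the event $\{X_n=y_n\text{ i.o.}\}$ is a tail event of $X$, so its probability $f(y)$ is $0$ or $1$. Also, $f$ is unchanged by modifying finitely many $y_n$, so $f(Y)$ is $\mathcal T^Y$-measurable and hence a.s.\ constant.

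For part (2), your uniformization is sound. In the transient case it is even slightly cleaner than the paper's holding-time argument. Meetings are bounded by the visits of the uniformized pair chain to the diagonal. By reversibility and Chapman--Kolmogorov, each binomial term satisfies $\sum_z p_j(x,z)p_{n-j}(x,z)\le\frac{\sup\pi}{\pi(x)}p_n(x,x)$, so the number of such visits has finite expectation. Otherwise, ``carries over verbatim'' inherits the gap above.
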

\begin{proof}
We distinguish according to whether the Markov chains are transient or recurrent. Fix the initial state~$\rho \in \mathsf S$, by irreducibility~$\pi(\rho)>0$.

\paragraph{Assume the~$P$-chain and the~$Q$-chain are transient.} Write~$p_n(x,y)=P^n(x,y)$,~$x,y \in \mathsf S$. By reversibility,
\begin{equation}\label{eq_reversibility}
p_{2n}(\rho,\rho) = \sum_{y\in \mathsf S}p_n(\rho,y)p_n(y,\rho) = \pi(\rho)\sum_{y\in \mathsf S}\frac{p_n(\rho,y)^2}{\pi(y)}.
\end{equation}
Then, by Tonelli's theorem,
\[ \E \left[ \sum_{n\geq0}\mathds{1} \{X_n=Y_n\} \right] = \sum_{n \ge 0} \sum_{y \in \mathsf S} p_n(\rho,y)^2 \le \frac{ \sup_{x \in \mathsf S}\pi(x) }{\pi(\rho)} \cdot \sum_{n \ge 0} p_{2n}(\rho,\rho)<\infty. \]
Hence~$\Pp(\mathcal E^P_\mathrm{disc}) = 0$. We argue similarly in continuous time. Let~$q_t$ denote the transition probabilities of the~$Q$-chain. By reversibility, the analogue of~\eqref{eq_reversibility} holds with~$p_n$ and~$p_{2n}$ replaced by~$q_t$ and~$q_{2t}$. Hence, by Tonelli's theorem,
\begin{equation}\label{ineq_cont_meet}
\E \left[ \int^\infty_0 \mathds{1}\{ X^\mathrm{cont}_t = Y^\mathrm{cont}_t \} \; \mathrm{d}t \right] = \int^\infty_0 \sum_{y \in \mathsf S} q_t(\rho,y)^2 \; \mathrm{d}t \le \frac{ \sup_{x \in \mathsf S}\pi(x) }{\pi(\rho)} \cdot \int^\infty_0 q_{2t}(\rho,\rho) \; \mathrm{d}t < \infty.     
\end{equation}
We define~$\mathcal T_\mathrm{meet} := \{ t \ge 0: X^\mathrm{cont}_{t^-} \neq Y^\mathrm{cont}_{t^-} \text{ and } X^\mathrm{cont}_t = Y^\mathrm{cont}_t \}$. For each~$t \in \mathcal T_\mathrm{meet}$, denote by~$x_t \in \mathsf S$ the meeting site~$x_t = X^\mathrm{cont}_t = Y^\mathrm{cont}_t$. On the event~$\mathcal E^Q_\mathrm{cont}$, we have~$|\mathcal T_\mathrm{meet}| = \infty$. Let~$E_t$ be the holding time of the pair chain at~$x_t$, then, we can write
$$ \int^\infty_0 \mathds{1}\{X^\mathrm{cont}_t  = Y^\mathrm{cont}_t\} \; \mathrm{d}t \ge \sum_{t \in \mathcal T_\mathrm{meet}} E_t, \qquad E_t \sim \mathrm{Exp}( 2q(x_t) ). $$
Conditionally on the sequence~$(x_t)_{t \in \mathcal T_\mathrm{meet}}$, the variables~$(E_t)_{t \in \mathcal T_\mathrm{meet}}$ are independent. Then, for every~$t \in \mathcal T_\mathrm{meet}$,
$$ \Pp \left. \left( E_t \ge \frac{1}{2 \cdot \sup_{x \in S}q(x)} \right| (x_t)_{t \in \mathcal T_\mathrm{meet}} \right) \ge \exp(-1). $$
Hence, by the Borel–Cantelli lemma,~$\sum_{t \in \mathcal T_\mathrm{meet}} E_t = \infty$. Thus
\[ \mathcal E^Q_\mathrm{cont} \subseteq \left\{ \int^\infty_0 \mathds{1}\{X^\mathrm{cont}_t = Y^\mathrm{cont}_t \} =\infty \right\} \qquad\text{a.s.} \]
Finally, from~\eqref{ineq_cont_meet} it follows that~$\Pp( \mathcal E^Q_\mathrm{cont} ) = 0$.

\paragraph{Assume the~$P$-chain and the~$Q$-chain are recurrent.} The discrete-time statement follows from Proposition 2.1 and Remark 2.2 of~\cite{BarlowPeresSousi2012}: their argument applies to any recurrent Markov chain. We briefly recall the mechanism, since the same argument will be used in continuous time. If
\[ \mathcal T^X_n := \sigma(X_k:k\geq n) \quad \text{ and } \quad \mathcal T^X = \bigcap_{n\geq0} \mathcal T^X_n, \]
recurrence and irreducibility imply, by Orey's theorem, that~$\mathcal T^X$ is trivial, this is called the tail~$\sigma$-algebra of~$X$. The same is true for~$(Y_n)_{n \ge 0}$. By independence,
\[\bigcap_{n\geq0} \sigma( \mathcal T^X_n, \mathcal T^Y_n )  = \sigma (\mathcal T^X, \mathcal T^Y) \]
up to null sets. Since~$\mathcal E^P_\mathrm{disc}$ belongs to the~$\sigma$-algebra on the left-hand side, it follows that~$\Pp(\mathcal E^P_\mathrm{disc})\in \{0,1\}.$

Following the discrete-time argument, it suffices to prove a continuous-time version of Orey's theorem, namely, that the tail~$\sigma$-algebra
$$ \mathcal T^X_\mathrm{cont} := \bigcap_{t \ge 0} \sigma( Z_s : s \ge t)$$
of a recurrent chain~$(Z_t)_{t \ge 0}$ is trivial. To do so, we assume~$Z_0 = \rho$ and define the successive return times~$0=\tau_0 < \tau_1 < \tau_2 < \cdots,$ by
\[ \tau_{k+1} := \inf\{t>\tau_k : Z_t=\rho,\ Z_s\neq \rho \text{ for some } s\in(\tau_k,t)\}, \quad k \in \N \cup \{0\}. \]
On the Skorokhod space~$D([0,\infty),\mathsf S)$ we define for each~$k$ the padded excursion~$E_k \in D([0,\infty),\mathsf S)$ by
$$ E_k(t)= \begin{cases}
Z_{\tau_{k-1}+t}, & 0\le t \le \tau_k-\tau_{k-1},\\
\rho, & t > \tau_k-\tau_{k-1}.
\end{cases} $$
Since the chain is recurrent and nonexplosive, every excursion has finite duration and~$\tau_k \uparrow \infty$ almost surely. A finite permutation of the excursion sequence therefore changes the trajectory only on a bounded time interval: after all the permuted excursions have been completed, the original and permuted trajectories coincide. Consequently, every event in~$ \mathcal T^X_\mathrm{cont}$, is invariant under finite permutations of the excursion sequence. Moreover, by the strong Markov property, the sequence $(E_k)_{k\ge1}$ is an i.i.d. sequence in~$D([0,\infty),\mathsf S)$. Hence, by the Hewitt--Savage $0$--$1$ law~\cite[Theorem~3.15]{Kallenberg},~$\mathcal T^X_\mathrm{cont}$ is trivial. This completes the proof.
\end{proof}

\begin{remark}\label{remark_ICP_propagation}
By irreducibility of the chain and the Markov property applied to the pair process, the event~$\mathcal E^P_\mathrm{disc}$ propagates to all initial common states, that is, if~$\Pp( \mathcal E^P_\mathrm{disc} ) = 1$ for some initial condition~$X_0=Y_0=x$, then~$\Pp( \mathcal E^P_\mathrm{disc} ) = 1$ for any other initial state~$X_0=Y_0=x'$. The same holds for the event~$\mathcal E^Q_\mathrm{cont}$.
\end{remark}
\begin{remark}\label{remark_dichotomy_ICPFCP}
Proposition~\ref{prop_0-1_law} implies the following dichotomy: either the two Markov chains (discrete-time or continuous-time) collide infinitely many times almost surely, or they collide only finitely many times almost surely.    
\end{remark}

As applications, we consider the following two cases
\begin{enumerate}
\item Let~$G$ be a bounded degree, connected simple graph. Consider the~$Q$-matrix
$$q(x,y) = \deg(x)^{-1} \cdot \mathds{1}\{ \{x,y\} \in E(G)\}, \quad x \neq y.$$
This Markov chain admits a reversible measure given by~$\pi(x) = \deg(x)$.
\item Let~$G$ be a locally finite, connected simple graph and let~$c:E(G) \to (0,\infty)$ be a function that assigns to each edge~$e$ of~$G$ a positive \emph{conductance}~$c(e)$ that satisfies
$$ \sup_{x \in V(G)} \sum_{y: \{x,y\} \in E(G)} c(\{x,y\}) < \infty.$$ 
Consider the~$Q$-matrix
$$ q(x,y) = c(\{x,y\}) \cdot \mathds{1}\{ \{x,y\} \in E(G) \}, \quad x \neq y. $$
This Markov chain admits a reversible measure given by~$\pi(x)=1$. 
\end{enumerate}

\subsection{Martin capacities}
In this section, we recall the relation between Martin capacities and hitting probabilities established in~\cite{BenjaminiPemantlePeres1995}, and use it to compare the collision properties of discrete- and continuous-time Markov chains.

Let~$P$ be a transient irreducible Markov kernel on~$\mathsf S$, with~$n$-step transition probabilities~$p_n(x,y)=P^n(x,y)$. Its Green kernel is defined by
$$ G(x,y) = \sum^\infty_{n=0} p_n(x,y), x,y \in \mathsf S.$$
Fix a reference state~$\rho \in \mathsf S$. The Martin kernel associated with~$P$ and~$\rho$ is defined by
$$M(x,y) := \frac{G(x,y)}{G(\rho,y)}, \qquad x,y \in \mathsf S.$$
The capacity of~$\Lambda \subseteq \mathsf S$ with respect to the Martin kernel is
$$ \mathrm{Cap}_M(\Lambda) := \left[ \inf_{\mu} \sum_{x,y \in \Lambda} M(x,y) \mu(x) \mu(y)  \right]^{-1}, $$
where the infimum is over probability measures on~$\mathsf S$ supported in~$\Lambda$, with the convention~$\infty^{-1}=0$. The \emph{asymptotic capacity} of~$\Lambda$ is defined by
$$ \mathrm{Cap}^{(\infty)}_M(\Lambda) := \inf_{ \{ \Lambda_0 \text{ finite} \} } \mathrm{Cap}_M( \Lambda \setminus \Lambda_0 ). $$

Theorem~2.2 of~\cite{BenjaminiPemantlePeres1995} gives the following comparison between asymptotic Martin capacity and the probability of visiting a set infinitely often.
\begin{theorem}\label{thm:Martin_kernel}
Let~$(X_n)_{n \ge 0}$ be a transient Markov chain on~$\mathsf S$ with initial state~$\rho$, and let~$M$ be its Martin kernel. Then, for any~$\Lambda \subseteq S$,
$$ \frac 12 \cdot \mathrm{Cap}^{(\infty)}_M(\Lambda) \leq \Pp( X_n \in \Lambda \text{ infinitely often} ) \leq \mathrm{Cap}^{(\infty)}_M(\Lambda) $$
\end{theorem}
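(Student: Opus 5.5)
The statement is Theorem~2.2 of~\cite{BenjaminiPemantlePeres1995}, and I would reprove it in three stages. First, prove the finite-set version
\[
\tfrac12\,\mathrm{Cap}_M(\Lambda)\le \Pp(\tau_\Lambda<\infty)\le \mathrm{Cap}_M(\Lambda)
\]
for finite $\Lambda$, where $\tau_\Lambda$ is the hitting time of $\Lambda$. Second, extend it to arbitrary $\Lambda$ by exhaustion. Third, pass to infinitely many visits by removing finite sets. Throughout I use irreducibility, so that $G(\rho,y)\in(0,\infty)$ and $M$ is well defined.

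\emph{Upper bound (first-entrance decomposition).} Let $\nu(x)=\Pp(\tau_\Lambda<\infty,\,X_{\tau_\Lambda}=x)$ for $x\in\Lambda$. Every visit to $y\in\Lambda$ happens after $\tau_\Lambda$. Hence the strong Markov property gives $G(\rho,y)=\sum_{x\in\Lambda}\nu(x)G(x,y)$, that is,
\[
\sum_x \nu(x)M(x,y)=1 \quad\text{for every } y\in\Lambda .
\]
If $\nu\neq0$, put $\mu=\nu/\|\nu\|$. Its energy is
\[
\sum_{x,y}M(x,y)\mu(x)\mu(y)=\frac{1}{\|\nu\|}\sum_y\mu(y)=\frac{1}{\|\nu\|}.
\]
Therefore $\mathrm{Cap}_M(\Lambda)\ge\|\nu\|=\Pp(\tau_\Lambda<\infty)$.

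\emph{Lower bound (second moment).} Fix a probability measure $\mu$ on $\Lambda$ with finite energy $\mathcal I(\mu)$. Let $N(y)$ be the number of visits to $y$, and set
\[
Z=\sum_y\mu(y)\,\frac{N(y)}{G(\rho,y)} .
\]
Then $\E Z=1$. Splitting according to which of $x,y$ is visited first, the Markov property gives
\[
\E[N(x)N(y)]\le G(\rho,x)G(x,y)+G(\rho,y)G(y,x).
\]
Dividing by $G(\rho,x)G(\rho,y)$ turns these terms into $M(x,y)$ and $M(y,x)$, so $\E Z^2\le 2\mathcal I(\mu)$. By Cauchy--Schwarz,
\[
\Pp(\tau_\Lambda<\infty)\ge\Pp(Z>0)\ge(\E Z)^2/\E Z^2\ge 1/(2\mathcal I(\mu)).
\]
Optimizing over $\mu$ gives the factor $\tfrac12$.

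\emph{Passage to general $\Lambda$ and to infinitely many visits.} For infinite $\Lambda$, I take finite $\Lambda_k\uparrow\Lambda$. Then $\Pp(\tau_{\Lambda_k}<\infty)\uparrow\Pp(\tau_\Lambda<\infty)$, and I need $\mathrm{Cap}_M(\Lambda_k)\uparrow\mathrm{Cap}_M(\Lambda)$. Monotonicity in the set is clear, since the infimum runs over more measures. For the limit, note that any finite-energy $\mu$ on $\Lambda$ can be restricted to $\Lambda_k$ and renormalized; the energies converge by monotone convergence, as $M\ge0$.

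For the final statement, pick finite sets $\Lambda_0^{(k)}$ increasing to $\mathsf S$. The events $\{\tau_{\Lambda\setminus\Lambda_0^{(k)}}<\infty\}$ decrease to $\{X_n\in\Lambda \text{ i.o.}\}$. Every finite $\Lambda_0$ lies inside some $\Lambda_0^{(k)}$, and capacity is monotone, so the infimum defining $\mathrm{Cap}^{(\infty)}_M$ equals $\lim_k\mathrm{Cap}_M(\Lambda\setminus\Lambda_0^{(k)})$. Applying the two-sided bound to each $\Lambda\setminus\Lambda_0^{(k)}$ and letting $k\to\infty$ then yields the theorem.

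\emph{Main obstacle.} The delicate step is the bound on $\E[N(x)N(y)]$, including the diagonal terms $x=y$. Since $\E N(x)^2=G(\rho,x)(2G(x,x)-1)$, the diagonal is still dominated by the symmetric sum. The other delicate point is the exhaustion argument for infinite $\Lambda$, since the upper-bound measure $\nu$ may have infinite support. I would handle both by working with finite truncations throughout and only taking limits at the end.
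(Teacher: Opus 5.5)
Your argument is correct and is essentially the original Benjamini--Pemantle--Peres proof: the first-entrance identity $\sum_x\nu(x)M(x,y)=1$ for the hitting measure gives the upper bound, the second moment of $Z=\sum_y\mu(y)N(y)/G(\rho,y)$ gives the lower bound, and removing finite sets handles infinitely many visits; the paper gives no proof and simply quotes \cite[Theorem~2.2]{BenjaminiPemantlePeres1995}. Two small remarks: the finite-truncation stage is superfluous, since both bounds go through verbatim for infinite $\Lambda$ by Tonelli's theorem (so the infinite support of $\nu$ is harmless), and the a.s.\ identity $\bigcap_k\{\tau_{\Lambda\setminus\Lambda_0^{(k)}}<\infty\}=\{X_n\in\Lambda \text{ infinitely often}\}$ uses transience (each state is visited only finitely often), which you should state explicitly.
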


We now apply this result to the pair chains associated with a discrete-time Markov chain and its Poissonization. Let~$P$ be an irreducible Markov kernel on~$\mathsf S$ and consider the $Q$-matrix~$Q_P:= P - I$. Thus,
$$ q(x,y) = P(x,y), \quad x \neq y, \qquad \text{and} \qquad q(x) = 1 - P(x,x), \quad x \in \mathsf S. $$
The $Q_P$-chain is the rate-$1$ Poissonization of the discrete-time $P$-chain. Indeed, if~$(X_n)_{n\geq0}$ is a~$P$-chain and
$(N_t)_{t\geq0}$ is an independent Poisson process of rate~$1$, then~$(X_{N_t})_{t\geq0}$ is a continuous-time Markov chain with $Q$-matrix~$(P-I)$. Consequently, if~$P$ is reversible with respect to a measure~$\pi$, then~$Q_P$ is reversible with respect to the same measure.

Define the Markov kernels on $\mathsf S\times \mathsf S$
$$\mathsf P_\mathrm{disc}:=P\otimes P \qquad \text{and} \qquad \mathsf P_\mathrm{cont} := \frac{1}{2}\big(P \otimes I+I\otimes P \big). $$
The first kernel describes the joint evolution of two independent
discrete-time~$P$-chains. The second is the discrete-time kernel underlying the Poissonization of the pair of independent~$Q$-chains. Indeed, the generator of the continuous-time pair process is
\[ (P-I)\otimes I+I\otimes (P-I) = 2\big(\mathsf P_{\mathrm{cont}}-I\big). \]
Hence, the continuous-time pair process is the rate-$2$ Poissonization of the~$\mathsf P_{\mathrm{cont}}$-chain. 

Let
$$ \Delta:=\{(x,x):x\in S\} $$
be the diagonal of $\mathsf S\times \mathsf S$. Thus, visits of the~$\mathsf P_\mathrm{disc}$-chain (resp.~$\mathsf P_\mathrm{cont}$-chain) to~$\Delta$ correspond exactly to collisions of the two discrete-time (resp. continuous-time) chains. Recall the collision events from~\eqref{def_collision_event_disc} and~\eqref{def_collision_event_cont}. 

\begin{lemma}\label{lem:green_comparison_collision}
Let~$P$ be an irreducible Markov kernel on $\mathsf S$, and let $G_{\rm disc}$ and $G_{\rm cont}$ be the Green kernels associated with $\mathsf P_{\rm disc}$ and $\mathsf P_{\rm cont}$, respectively. Assume that there exist constants $0<c\leq C<\infty$ such that
\begin{equation}\label{ineq_doble_Green}
c \cdot G_{\rm disc}( (x,x),(y,y) ) \leq G_{\rm cont}( (x,x),(y,y) ) \leq C \cdot G_{\rm disc}( (x,x),(y,y) ), \quad x,y \in \mathsf S.
\end{equation}
Then, 
$$ \mathbb P \big( \mathcal E^P_\mathrm{disc} \big)>0 \; \text{ if and only if } \; \mathbb P ( \mathcal E^{Q_P}_\mathrm{cont} ) >0.$$
for every common initial state~$(\rho,\rho)$. 
\end{lemma}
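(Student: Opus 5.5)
The plan is to reduce both collision events to the event that the corresponding pair chain on $\mathsf S\times\mathsf S$ visits the diagonal $\Delta$ infinitely often. Then Theorem~\ref{thm:Martin_kernel} converts this into a statement about the asymptotic Martin capacity of $\Delta$, and \eqref{ineq_doble_Green} shows that the two capacities vanish or not together.

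\emph{Step 1: reduce to visits of $\Delta$.} For the discrete chains, $\mathcal E^P_{\rm disc}$ is exactly the event that the $\mathsf P_{\rm disc}$-chain started at $(\rho,\rho)$ visits $\Delta$ infinitely often. For continuous time, I use the representation of the pair process as the rate-$2$ Poissonization of the $\mathsf P_{\rm cont}$-chain; the Poisson clock has no effect on the set of states visited. The subtle point is that $\mathcal E^{Q_P}_{\rm cont}$ counts \emph{entrances} into $\Delta$, while the skeleton may sit on $\Delta$ during consecutive steps. From $(x,x)$, a $\mathsf P_{\rm cont}$-step stays on $\Delta$ only if it stays at $(x,x)$ itself, which happens with probability $P(x,x)$. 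So infinitely many visits to $\Delta$ with only finitely many entrances would force the skeleton to be eventually constant. This has probability zero unless $P(x,x)=1$ for some $x$, which is excluded by irreducibility when $|\mathsf S|\ge2$; the case $|\mathsf S|=1$ is trivial. Hence, up to null sets, $\mathcal E^{Q_P}_{\rm cont}$ equals the event that the $\mathsf P_{\rm cont}$-chain visits $\Delta$ infinitely often.

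\emph{Step 2: recurrent diagonal states.} The pair chains need not be irreducible (for example under periodicity), and their Green kernels may be infinite, so before applying Theorem~\ref{thm:Martin_kernel} I split off the following case. Suppose some $(y,y)$ satisfies $G_{\rm disc}((\rho,\rho),(y,y))>0$ and $G_{\rm disc}((y,y),(y,y))=\infty$. Then $(y,y)$ is a recurrent state reachable from $(\rho,\rho)$, so the $\mathsf P_{\rm disc}$-chain visits it infinitely often with positive probability. By \eqref{ineq_doble_Green}, both properties hold for $G_{\rm cont}$ as well: positivity is preserved, and $G_{\rm disc}((y,y),(y,y))=\infty$ forces $G_{\rm cont}((y,y),(y,y))=\infty$. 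So both collision probabilities are positive. The symmetric case, with the roles of the two kernels exchanged, is handled the same way.

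\emph{Step 3: the transient case and the capacity comparison.} Otherwise, every diagonal state reachable from $(\rho,\rho)$ is transient for both chains, and a chain visiting a recurrent class never leaves it. Thus, on the event of infinitely many visits to $\Delta$, the visits are to transient states, and I may apply Theorem~\ref{thm:Martin_kernel}, whose proof only uses Green kernels on the relevant set, with $\Lambda:=\{(y,y): G_{\rm disc}((\rho,\rho),(y,y))>0\}$. By \eqref{ineq_doble_Green} this is the same set for $G_{\rm cont}$. For $(x,x),(y,y)\in\Lambda$, \eqref{ineq_doble_Green} gives
\[
\frac{c}{C}\,M_{\rm disc}\big((x,x),(y,y)\big)\le M_{\rm cont}\big((x,x),(y,y)\big)\le \frac{C}{c}\,M_{\rm disc}\big((x,x),(y,y)\big).
\]
Taking the infimum over probability measures supported in $\Lambda\setminus\Lambda_0$ and then over finite sets $\Lambda_0$, this gives
\[
\frac{c}{C}\,\mathrm{Cap}^{(\infty)}_{M_{\rm disc}}(\Lambda)\le \mathrm{Cap}^{(\infty)}_{M_{\rm cont}}(\Lambda)\le\frac{C}{c}\,\mathrm{Cap}^{(\infty)}_{M_{\rm disc}}(\Lambda).
\]
Combined with both sides of Theorem~\ref{thm:Martin_kernel}, each collision probability is positive if and only if the corresponding asymptotic capacity is positive, and these positivity conditions coincide.

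I expect the main obstacle to be the bookkeeping in Steps~1--2 rather than the capacity comparison itself. One must apply Theorem~\ref{thm:Martin_kernel} to a pair chain that may be neither irreducible nor globally transient. This means checking that restricting $\Lambda$ to the reachable diagonal is harmless, and that $G(\rho,y)>0$ on $\Lambda$ so that $M$ is well defined. One must also justify the identification of entrance times with skeleton visits to $\Delta$ under the continuous-time definition \eqref{def_collision_event_cont}.
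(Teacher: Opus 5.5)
Your proposal is correct and follows essentially the paper's route. You split off the case of a recurrent diagonal state, then in the transient case compare the Martin kernels via \eqref{ineq_doble_Green} and apply Theorem~\ref{thm:Martin_kernel} to the diagonal; your entrance-versus-visit argument matches the paper's accompanying remark, and your restriction to reachable states is harmless because irreducibility of $P$ already puts all of $\Delta$ in one communicating class of each pair chain, so $\Lambda=\Delta$ and Step~2 reduces to recurrence of $(\rho,\rho)$. One small correction: the case $|\mathsf S|=1$ is not trivially fine but must be excluded, since there the discrete pair collides at every step while the continuous pair never moves and so has no meetings in the sense of \eqref{def_collision_event_cont}.
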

\begin{remark}
If~$P$ has self-loops, visits of the~$\mathsf P_{\mathrm{cont}}$-chain to~$\Delta$ need not correspond one-to-one to distinct meetings. However, irreducibility implies that every consecutive run of self-loops on~$\Delta$ is almost surely. Hence, infinite visits to~$\Delta$ are equivalent to infinitely many continuous-time collisions.
\end{remark}

\begin{remark}\label{remark_01_upgrade}
Suppose, in addition, that~$P$ is reversible with respect to a measure~$\pi$ satisfying~$\sup_{x \in \mathsf S}\pi(x) < \infty$. Then, Proposition~\ref{prop_0-1_law} upgrades the conclusion to 
$$ \mathbb P \big( \mathcal E^P_\mathrm{disc} \big) = 1 \; \text{ if and only if } \; \mathbb P ( \mathcal E^{Q_P}_\mathrm{cont} ) = 1.$$
\end{remark}
\begin{proof}
Fix~$\rho\in\mathsf S$. By
\eqref{ineq_doble_Green},
\[
G_{\mathrm{disc}}( (\rho,\rho),(\rho,\rho))=\infty
\quad\Longleftrightarrow\quad
G_{\mathrm{cont}}((\rho,\rho),(\rho,\rho))=\infty.
\]
Suppose first that these quantities are infinite. Then~$(\rho,\rho)$ is recurrent for both pair chains. Consequently, both pair chains return to
$(\rho,\rho) \in\Delta$ infinitely often almost surely. Hence,
\[ \Pp \big(\mathcal E^P_{\mathrm{disc}}\big) = \Pp \big(\mathcal E^Q_{\mathrm{cont}}\big) = 1. \]

It remains to consider the transience case. By~\eqref{ineq_doble_Green},
\[
G_{\mathrm{disc}}( (\rho,\rho),(\rho,\rho)) < \infty
\quad\Longleftrightarrow\quad
G_{\mathrm{cont}}((\rho,\rho),(\rho,\rho)) < \infty.
\]
Hence, in this case, both pair chains are transient on the communicating class of~$(\rho,\rho)$. Since~$P$ is irreducible, every point of the diagonal~$\Delta$ belongs to this communicating class for both pair chains. We therefore apply Theorem~\ref{thm:Martin_kernel} on these classes, with reference state~$(\rho,\rho)$.

Let~$M_{\mathrm{disc}}$ and~$M_{\mathrm{cont}}$ denote the corresponding Martin kernels with reference state~$(\rho,\rho)$. For every~$x,y \in \mathsf S$, the Green-kernel comparison~\eqref{ineq_doble_Green} gives
\[ \frac{c}{C}\, M_{\mathrm{disc}}( (x,x),(y,y) )
\leq M_{\mathrm{cont}}( (x,x), (y,y) )
\leq \frac{C}{c}\, M_{\mathrm{disc}}( (x,x), (y,y) ). \]
Consequently,
\[
\frac{c}{C}\,
\operatorname{Cap}^{(\infty)}_{M_{\mathrm{disc}}}(\Delta)
\leq
\operatorname{Cap}^{(\infty)}_{M_{\mathrm{cont}}}(\Delta)
\leq
\frac{C}{c}\,
\operatorname{Cap}^{(\infty)}_{M_{\mathrm{disc}}}(\Delta).
\]
Then immediately:
\[
\operatorname{Cap}^{(\infty)}_{M_{\mathrm{disc}}}(\Delta)>0
\quad\Longleftrightarrow\quad
\operatorname{Cap}^{(\infty)}_{M_{\mathrm{cont}}}(\Delta)>0.
\]
The conclusion now follows from Theorem~\ref{thm:Martin_kernel} applied to
the diagonal~$\Delta$.
\end{proof}

\subsection{Proof of Theorem~\ref{thm:main}}\label{sec:proof_main}
The proof of the main theorem follows from the following more general Green-kernel comparison.
\begin{lemma}\label{lem:green-comparison}
Let~$P$ be an irreducible Markov kernel on $\mathsf S$, and let $G_{\rm disc}$ and $G_{\rm cont}$ be the Green kernels associated with $\mathsf P_{\rm disc}$ and $\mathsf P_{\rm cont}$, respectively. Suppose that, for some~$\alpha>0$,
\begin{equation}\label{condition_inf}
\inf_{x \in \mathsf S} P^2(x,x) \ge \alpha
\end{equation}
Then, there exists~$A = A(\alpha) > 0$ such that
$$ A \cdot G_{\rm disc}( (x,x);(y,y) ) \leq G_{\rm cont}( (x,x);(y,y) ) \leq 2 G_{\rm disc}( (x,x);(y,y) ). $$
for each~$x,y \in \mathsf S$.
\end{lemma}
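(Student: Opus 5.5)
The plan is to expand both Green kernels into explicit double sums over the one-chain transition probabilities $p_n(x,y)=P^n(x,y)$. Since $P\otimes I$ and $I\otimes P$ commute, the binomial theorem gives
\[
\mathsf P_{\rm cont}^m=2^{-m}\sum_{k=0}^m\binom{m}{k}P^k\otimes P^{m-k}.
\]
Summing over $m$ and evaluating between diagonal points,
\[
G_{\rm cont}((x,x),(y,y))=\sum_{j,k\ge0}\binom{j+k}{k}2^{-(j+k)}\,p_j(x,y)\,p_k(x,y),
\qquad
G_{\rm disc}((x,x),(y,y))=\sum_{n\ge0}p_n(x,y)^2 .
\]
Both inequalities in Lemma~\ref{lem:green-comparison} then reduce to comparing a weighted quadratic form in the sequence $a_n:=p_n(x,y)$ with $\sum_n a_n^2$.

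\textbf{Upper bound.} This step does not use \eqref{condition_inf}. Use $a_ja_k\le\frac12(a_j^2+a_k^2)$ together with the negative-binomial identity $\sum_{k\ge0}\binom{j+k}{k}2^{-(j+k+1)}=1$, i.e.\ $\sum_k\binom{j+k}{k}2^{-(j+k)}=2$ for every fixed $j$. By symmetry in $j,k$,
\[
G_{\rm cont}\le\tfrac12\bigl(2\textstyle\sum_j a_j^2+2\sum_k a_k^2\bigr)=2\,G_{\rm disc}.
\]

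\textbf{Lower bound.} Discard all terms except those with $j+k=2n$ and $|j-n|\le\sqrt n$. For these, the local central limit estimate for the symmetric binomial gives $\binom{2n}{j}4^{-n}\ge c_0 n^{-1/2}$ with a universal $c_0>0$. Hence
\[
G_{\rm cont}\ge c_0\sum_{n\ge1}n^{-1/2}\sum_{|d|\le\sqrt n}a_{n+d}\,a_{n-d}.
\]
Keeping only $d=0$ gives merely $\sum_n n^{-1/2}a_n^2$, which loses the factor $\sqrt n$ mentioned in the introduction. The task is therefore to show that the window sum $\sum_{|d|\le\sqrt n}a_{n+d}a_{n-d}$ is, after summing over $n$, at least a constant times $\sum_n\sqrt n\,a_n^2$.

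\textbf{Main obstacle and first attempt.} This is exactly where \eqref{condition_inf} enters, and it is the step I expect to be hardest. The naive bound $a_{n+2i}\ge\alpha^i a_n$, obtained by inserting returns to $x$, decays exponentially in $i$ and so controls only windows of bounded size. I would instead use that $R:=(1-\alpha)^{-1}(P^2-\alpha I)$ is a Markov kernel, since by \eqref{condition_inf} it has nonnegative entries and unit row sums. Thus $P^2=\alpha I+(1-\alpha)R$ is a genuinely lazy chain, and $P^{2i}=\sum_l\binom{i}{l}\alpha^{i-l}(1-\alpha)^lR^l$. I would then work at the level of the pair kernel. Write $\mathsf P_{\rm cont}^2=\frac12 P\otimes P+\frac14(P^2\otimes I+I\otimes P^2)$ and substitute the lazy decomposition into the second summand. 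A path of $2m$ steps of $\mathsf P_{\rm cont}$ is then a mixture, with binomial weights, of paths consisting of $l$ synchronous steps $P\otimes P$ interleaved with independent lazy $R$-steps in each coordinate. The claim to aim for is that, for $l$ in a window of width $\asymp\sqrt m$ around its mean, the laziness (at rate at least $\alpha/2$ per step) lets one couple the two coordinates' clocks so that they are exactly synchronized with probability bounded below by $c(\alpha)>0$. Granting this, $\mathsf P_{\rm cont}^{2m}\ge c(\alpha)\,m^{-1/2}\sum_{|l-\bar l|\le\sqrt m}(P\otimes P)^{l}$ holds entrywise on $\Delta\times\Delta$ up to constants. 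Each $l$ is covered by $\asymp\sqrt m$ values of $m$, so summing over $m$ recovers the factor $\sqrt m$ and yields $G_{\rm cont}\ge A(\alpha)\,G_{\rm disc}$. The delicate point is to make the synchronizing coupling uniform in the state, which is where the infimum in \eqref{condition_inf}, rather than a pointwise bound, is needed. If the coupling is not exactly synchronizing, I would fall back on directly proving the averaged inequality $\sum_n\sum_{|d|\le\sqrt n}a_{n+d}a_{n-d}\gtrsim\sum_n\sqrt n\,a_n^2$ from the lazy representation $a_{n+2i}=\sum_z P^{2i}(x,z)p_n(z,y)$.
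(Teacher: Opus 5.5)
Your expansion of $G_{\rm cont}$ and your upper bound are correct and are exactly the paper's. Restricting to $j+k=2n$, $|j-n|\le\sqrt n$ also selects essentially the index pairs the paper uses. The gap is the step that recovers the factor $\sqrt n$.

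\textbf{Why the proposed mechanism fails.} Your main claim is the entrywise bound (with $\bar l=m$)
$$\mathsf P_{\rm cont}^{2m}((x,x),(y,y))\ge c(\alpha)\,m^{-1/2}\sum_{|l-m|\le\sqrt m}p_l(x,y)^2 .$$
This is false with a constant depending only on $\alpha$. The coupling claim behind it is already off: the numbers of $R$-steps taken by the two coordinates differ by a quantity of order $\sqrt m$. So exact synchronization has probability of order $m^{-1/2}$, not $c(\alpha)$.

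For a concrete counterexample, take $P=\beta I+(1-\beta)T$ on $\mathbb Z/N\mathbb Z$, with $T$ the rotation and $N>2m$, so that $P^2(x,x)=\beta^2$. For $n\le 2m$ one has $p_n(0,y)=\binom ny(1-\beta)^y\beta^{n-y}$, which is log-concave in $n$. Hence
\[
\mathsf P_{\rm cont}^{2m}((0,0),(y,y))=\sum_j\binom{2m}{j}4^{-m}p_j(0,y)\,p_{2m-j}(0,y)\le p_m(0,y)^2 .
\]
Now choose $y\approx(1-\beta)m/2$. The ratio $p_{n+1}/p_n=\beta(n+1)/(n+1-y)$ stays near $2\beta/(1+\beta)<1$ across the window. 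So the right side of your claimed bound is at least $c(\alpha)m^{-1/2}p_{m-\lfloor\sqrt m\rfloor}(0,y)^2\ge c(\alpha)m^{-1/2}e^{c'\sqrt m}p_m(0,y)^2$, which beats the left side for large $m$.

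The underlying problem is that symmetric products $a_{n+d}a_{n-d}$ are not comparable to squares when the profile varies exponentially across a window of width $\sqrt n$. Your fallback only restates the inequality to be proved, with no argument. As written it also drops the weight $n^{-1/2}$, so it is not the inequality you need.

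\textbf{The missing idea.} Make the comparison linear in one coordinate. Fix the first coordinate's time $n=2m+\epsilon$ and show
$$\sum_{|\ell-m|\le\sqrt m}p_{2\ell+\epsilon}(x,y)\ge a_2\sqrt m\,p_{2m+\epsilon}(x,y).$$
This follows from three ingredients:
\begin{enumerate}
\item the binomial mixture $p_{2\ell+\epsilon}(x,y)=\sum_k\Pp(\mathrm{Bin}(\ell,1-\alpha)=k)\,(R^kP^\epsilon)(x,y)$, built from your kernel $R$;
\item an interchange of the sums over $\ell$ and $k$;
\item the uniform-in-$k$ estimate $\sum_{|\ell-m|\le\sqrt m}\Pp(\mathrm{Bin}(\ell,p)=k)\ge a_2\sqrt m\,\Pp(\mathrm{Bin}(m,p)=k)$.
\end{enumerate}
The last estimate has two cases. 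If the mode of $\ell\mapsto\Pp(\mathrm{Bin}(\ell,p)=k)$ is at distance at least $\sqrt m$ from $m$, unimodality gives about $\sqrt m$ window points whose values dominate the value at $m$. If the mode is within $\sqrt m$ of $m$, the local limit theorem gives it.

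Combine this with $\Psi(2m+\epsilon,2\ell+\epsilon)\ge a_1m^{-1/2}$ on the window. Multiplying by $p_{2m+\epsilon}(x,y)$ and summing over $m$ and $\epsilon$ gives $G_{\rm cont}\ge a_1a_2\sum_{n\ge2}p_n(x,y)^2$. The terms $n\in\{0,1\}$ are absorbed using $p_{n+2}(x,y)\ge\alpha\,p_n(x,y)$.

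In the cycle example this linear bound causes no trouble, because the left half of the window is large. That monotonicity is exactly what your product formulation throws away.
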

We now have all the ingredients needed to prove the main theorem.

\begin{proof}[Proof of Theorem~\ref{thm:main}]
Let~$G$ be a bounded-degree connected graph, set~$\mathsf S=V(G)$, and let~$P$ be the transition kernel of a simple random walk on~$G$, that is,~$P(x,y) = \deg(x)^{-1} \cdot \mathds{1}\{ \{x,y\} \in E(G) \}$,~$x,y \in V(G)$. Set~$\alpha^{-1} = \sup_{x \in V(G)} \deg(x)$. We have that
$$ P^2(x,x) = \sum_{y} P(x,y) \cdot P(y,x) \ge \alpha \cdot \sum_y P(x,y) = \alpha. $$
Lemma~\ref{lem:green-comparison} then gives a two-sided comparison between the Green kernels~$G_{\rm disc}$ and~$G_{\rm cont}$ on the diagonal~$\Delta$. We now apply Lemma~\ref{lem:green_comparison_collision} to obtain that \[ \Pp\big(\mathcal E^P_{\rm disc}\big)>0 \quad\Longleftrightarrow\quad \Pp\big(\mathcal E^{Q_P}_{\rm cont}\big)>0. \] Moreover, simple random walk is reversible with respect to~$\pi(x)=\deg(x)$ and this measure is uniformly bounded since~$G$ has bounded degree. Remark~\ref{remark_01_upgrade} therefore upgrades the preceding equivalence to \[ \Pp\big(\mathcal E^P_{\rm disc}\big)=1 \quad\Longleftrightarrow\quad \Pp\big(\mathcal E^{Q_P}_{\rm cont}\big)=1. \] 
By Remark~\ref{remark_dichotomy_ICPFCP}, each collision event has probability either 0 or 1. This proves the result.
\end{proof}

\section{Proof of Lemma~\ref{lem:green-comparison}}\label{ss_lem:green-comparison}
For $n,k\geq 0$, define
\[ \Psi(n,k) := \binom{n+k}{n}2^{-(n+k)} = \Pp( \mathrm{Bin}(n+k,1/2) = n ). \]
Thus,~$\Psi(n,k)$ is the probability that, during the first~$n+k$ steps of the $\mathsf P_{\rm cont}$-chain, the first coordinate is updated exactly~$n$ times and the second coordinate exactly~$k$ times. Consequently,
\begin{align}
G_{\rm disc}((x,x);(y,y)) &= \sum_{n=0}^{\infty}p_n(x,y)^2, \label{def_G_disc} \\
G_\mathrm{cont}((x,x),(y,y)) &= \sum_{n,k=0}^{\infty}
\Psi(n,k)p_n(x,y)p_k(x,y). \label{def_G_cont}
\end{align}

The upper bound in Lemma~\ref{lem:green-comparison} is immediate from~\eqref{def_G_cont}. For the lower bound, we need the following estimate.

\begin{lemma}\label{lem:thickness}
Let~$P$ be an irreducible Markov kernel with~$n$-step transition probabilities~$p_n$, and suppose that, for some~$\alpha > 0$,
$$ \inf_{x \in \mathsf S} P^2(x,x) \ge \alpha.$$
Then, there exists~$a = a(\alpha) > 0$ such that
$$ \sum_{ \substack{\ell \ge 0: \\|\ell - m| \leq \sqrt{m}  } } \Psi(2m+\epsilon,2\ell + \epsilon) \cdot p_{2\ell + \epsilon}(x,y) \ge a \cdot p_{2m+\epsilon}(x,y) $$
for every~$x,y \in \mathsf S, m \ge 1$ and~$\epsilon \in \{0,1\}$.
\end{lemma}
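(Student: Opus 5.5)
The plan is to compare the $(2\ell+\epsilon)$-step probabilities with the $(2m+\epsilon)$-step probability through the even-step return bound $P^2(x,x)\ge\alpha$, and then sum the binomial weights over the window. The key monotonicity-type estimate is
\[ p_{2j+\epsilon}(x,y) \ge (P^2)^{j-i}(x,x)\, p_{2i+\epsilon}(x,y) \ge \alpha^{j-i}\, p_{2i+\epsilon}(x,y), \qquad j \ge i, \]
obtained from the Chapman--Kolmogorov decomposition $p_{2j+\epsilon}(x,y)\ge p_{2(j-i)}(x,x)\,p_{2i+\epsilon}(x,y)$ together with $p_{2r}(x,x)\ge \alpha^r$. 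The weakness is that this gives a factor $\alpha^{j-i}$, which decays exponentially over a window of width $\sqrt m$. So the bound cannot be applied naively with $i=m$ across the whole window.

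The first step is therefore to restrict to the part of the window with $\ell \ge m$, and more precisely to $m\le \ell\le m+\lfloor\sqrt m\rfloor$. For those $\ell$ I would use only the trivial shift $p_{2\ell+\epsilon}(x,y)\ge \alpha^{\ell-m}p_{2m+\epsilon}(x,y)$; this is useless for $\ell-m$ large. I would then try to exploit reversibility-free structure differently: in the target sum the factor $\Psi(2m+\epsilon,2\ell+\epsilon)$ is of order $m^{-1/2}$ uniformly over $|\ell-m|\le\sqrt m$, by the local CLT for $\mathrm{Bin}(2m+2\ell+2\epsilon,1/2)$ at distance $O(\sqrt m)$ from its mean. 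Hence the left side is at least $c\,m^{-1/2}\sum_{|\ell-m|\le\sqrt m}p_{2\ell+\epsilon}(x,y)$. It therefore suffices to show that the average of $p_{2\ell+\epsilon}(x,y)$ over the window is at least a constant times $p_{2m+\epsilon}(x,y)$.

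For this averaging step I would use the reverse direction with a "lazy" mixing argument. Write $p_{2m+\epsilon}=p_{2\ell+\epsilon}$ composed with an even number of steps, and decompose the steps of the walk into pairs. Under $P^2$ each pair has a return-to-start probability of at least $\alpha$, so $P^2 = \alpha I + (1-\alpha)R$ for some Markov kernel $R$, at least after suitable splitting; I would try to realise $P^2\ge\alpha I$ entrywise on the diagonal as such a decomposition. Then $p_{2m+\epsilon}=\sum_{r}\binom{m}{r}\alpha^{m-r}(1-\alpha)^r R^r P^\epsilon(x,y)$, and similarly $p_{2\ell+\epsilon}$ is a binomial mixture of the same sequence $u_r:=R^rP^\epsilon(x,y)$ with $\mathrm{Bin}(\ell,1-\alpha)$ weights. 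The ratio of $\mathrm{Bin}(\ell,1-\alpha)$ to $\mathrm{Bin}(m,1-\alpha)$ weights at each $r$ is bounded below by a constant for $r$ within $O(\sqrt m)$ of the mean when $\ell\in[m,m+\sqrt m]$, and the tails carry a vanishing proportion. Comparing the binomial mixtures term by term, and averaging over $\ell$ in the window to absorb tail terms, should give $\frac{1}{\sqrt m}\sum_{\ell}p_{2\ell+\epsilon}\ge a'\,p_{2m+\epsilon}$.

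The main obstacle is this last comparison. The sequence $u_r$ is arbitrary (it is nonnegative but not monotone), so I need a pointwise-in-$r$ lower bound on $\frac{1}{\sqrt m}\sum_{\ell\in W}\mathrm{Bin}(\ell,1-\alpha)(r)$ by $a'\,\mathrm{Bin}(m,1-\alpha)(r)$ for all $r$, including the tails. I would handle this by a direct ratio computation: $\mathrm{Bin}(\ell,\beta)(r)/\mathrm{Bin}(m,\beta)(r)=\prod_{i=m+1}^{\ell}\frac{i}{(i-r)}(1-\beta)$. This product is at least $\ge1$-ish when $r\ge \beta \ell$, and when $r$ lies below the mean the ratio is small but $\mathrm{Bin}(m,\cdot)(r)$ is then controlled. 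For that lower regime I would instead use $\ell\le m$, i.e. the part of the window below $m$. Choosing the half-window adaptively according to $r$ should give a uniform constant depending only on $\alpha$, and small $m$ is handled trivially since $\Psi\ge 4^{-(2m+2\ell+2)}$ is bounded below for bounded $m$.
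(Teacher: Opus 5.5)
Your proposal is correct and is essentially the paper's proof. It uses the same bound $\Psi(2m+\epsilon,2\ell+\epsilon)\ge a_1/\sqrt{m}$ on the window, the same splitting $P^2=\alpha I+(1-\alpha)S$ that writes each $p_{2\ell+\epsilon}(x,y)$ as a $\mathrm{Bin}(\ell,1-\alpha)$-mixture of the fixed sequence $(S^kP^\epsilon)(x,y)$, and the same pointwise-in-$k$ window inequality $\sum_{|\ell-m|\le\sqrt m}\Pp(\mathrm{Bin}(\ell,1-\alpha)=k)\ge a_2\sqrt m\,\Pp(\mathrm{Bin}(m,1-\alpha)=k)$.

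The paper proves that inequality by unimodality in $\ell$, which is your adaptive choice of half-window, together with the local binomial estimate when $k\approx(1-\alpha)m$. In that central regime, your direct bound showing $\prod_{i=m+1}^{\ell}\frac{\alpha i}{i-k}$ is bounded below by a constant depending only on $\alpha$ works just as well.
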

We postpone the proof of Lemma~\ref{lem:thickness} and first complete the
proof of the Green-kernel comparison.

\begin{proof}[Proof of Lemma~\ref{lem:green-comparison}]
Fix~$x,y \in \mathsf S$. We begin with the upper bound. We have that
\begin{align*}
G_\mathrm{cont}( (x,x),(y,y)) & \stackrel{\eqref{def_G_cont}} \leq \frac12 \cdot
\sum_{n,k\geq 0} \Psi(n,k) \left(p_n(x,y)^2+p_k(x,y)^2 \right) \\[2mm]
&  \stackrel{(\ast)} = \sum_{n\geq 0}p_n(x,y)^2 + \sum_{k\geq 0}p_k(x,y)^2 = 2G_\mathrm{disc}((x,x),(y,y)),
\end{align*}
where the inequality follows from the inequality~$2ab \leq a^2 + b^2$ and the equality~$(\ast)$ from the symmetry of~$\Psi(n,k)$ and the identity~$\sum_{k \ge 0}\Psi(n,k) = 2$ for all~$n \ge 0$. This identity follows from the negative binomial series
$$ (1-s)^{-(n+1)} = \sum^\infty_{k=0} \binom{n+k}{k} s^k, \quad |s| < 1, $$
evaluated at~$s=1/2$. For the lower bound,
\begin{align*}
G_\mathrm{cont}((x,x),(y,y)) & = \sum_{n,k \ge 0} \Psi(n,k)p_n(x,y)p_k(x,y) \\[2mm]
& \ge \sum_{ \epsilon \in \{0,1\} } \sum_{m \ge 1} \sum_{ \substack{\ell \ge 0: \\ |\ell-m| \leq \sqrt{m} } }\Psi(2m+\epsilon,2\ell+\epsilon) \cdot p_{2m+\epsilon}(x,y)p_{2\ell+\epsilon}(x,y) \\[2mm]
& \stackrel{(\ast)} \ge \sum_{ \epsilon \in \{0,1\} } \sum_{m \ge 1} a \cdot p_{2m+\epsilon}(x,y)^2 = \sum_{n \ge 2} a \cdot p_n(x,y)^2 \ge \frac{a \alpha^2}{1+\alpha^2} \cdot G_\mathrm{disc}((x,x),(y,y)),
\end{align*}
where~$(\ast)$ follows from Lemma~\ref{lem:thickness} and the last inequality from Chapman-Kolmogorov identity:
$$ \sum_{n \ge 2} \cdot p_{n}(x,y)^2 \ge p_2(x,y)^2 + p_3(x,y)^2 \stackrel{\eqref{condition_inf}} \ge \alpha^2 \cdot \big( p_0(x,y)^2 + p_1(x,y)^2 \big). $$
We conclude the proof by taking~$A = \frac{a \alpha^2}{1+\alpha^2}$.
\end{proof}

\subsection{Binomial estimates}
We now prove Lemma~\ref{lem:thickness}. We use the following standard local
binomial estimate. Fix~$p \in (0,1)$ and~$L < \infty$. There exists constants~$0 < c_{p,L} \leq C_{p,L} < \infty$ such that
$$ \frac{c_{p,L}}{\sqrt{N}} \leq \Pp( \mathrm{Bin}( N,p ) = k ) \leq \frac{C_{p,L} }{\sqrt{N}} $$
whenever~$|k - Np| \leq L \sqrt{N}$; see, e.g., \cite[Chapter VII, Theorem 1]{Petrov1975}. We will use two consequences of this estimate.

First, there exists $a_1>0$ such that
\begin{equation}\label{ineq_bin_1}
\Psi(2m+\epsilon,2\ell + \epsilon) = \Pp( \mathrm{Bin}(2m+2\ell + 2\epsilon,1/2)=2\ell+\epsilon) \ge \frac{a_1}{\sqrt{m}}
\end{equation}
for each~$\epsilon \in \{0,1\}$,~$m \ge 1$ and~$\ell \ge 0$ with~$|m-\ell| \leq\sqrt{m}$.

\begin{proof}
Take~$N = 2m+2\ell+2\epsilon$ and~$k = 2\ell + \epsilon$ with parameter~$p = 1/2$. Then,
$$ |k-Np| = |\ell - m | \leq \sqrt{m} \leq \sqrt{N}. $$
The local binomial estimate therefore gives
$$ \Pp( \mathrm{Bin}(2m+2\ell + 2\epsilon,1/2)=2\ell+\epsilon) \ge \frac{c_{0.5,1}}{\sqrt{2m+2\ell+2\epsilon}} \ge \frac{c_{0.5,1}}{\sqrt{8m}}, $$
where the last inequality follows from the hypothesis relationship, which implies~$\ell \leq \sqrt{m} + m$. We complete the proof by setting~$a_1 = c_{0.5,1}/\sqrt{8}$.    
\end{proof}

Our second estimate is the following. For every~$p \in (0,1)$, there exists~$a_2 = a_2(p)>0$ such that 
\begin{equation}\label{ineq_bin_2}
\sum_{ \substack{\ell \ge 0: \\|\ell - m| \leq \sqrt{m} } }  \Pp(\mathrm{Bin}(\ell,p)=k ) \ge a_2 \sqrt{m} \cdot \Pp(\mathrm{Bin}(m,p)=k)
\end{equation}
for each~$m \ge 1$ and~$k \ge 0$.

\begin{proof}
Fix~$m \ge 1$ and~$k \ge 0$. If~$k > m$, then the probability on the right-hand side of~\eqref{ineq_bin_2} is zero, so it holds trivially. We may therefore assume that~$k \le m$. We define
$$ R(\ell) := \Pp( \mathrm{Bin}(\ell,p)=k ), \quad \ell \ge k. $$
We first locate its mode. A direct computation gives
$$ \frac{R(\ell+1)}{R(\ell)} = (1-p) \cdot \frac{\ell+1}{\ell+1-k} \ge 1 \quad \text{if and only if} \quad p(1+\ell) \leq k. $$
Then, the sequence~$R(\ell)$ increases until~$M:= \lfloor k/p \rfloor$ and decreases afterwards. We divide the analysis into two cases:

Suppose~$|m-M| \ge \lfloor \sqrt{m} \rfloor$. If~$m < M$, then by unimodality
$$ R(\ell) \ge R(m) \quad \text{for every} \quad \ell \in \{m,\dots,m+\lfloor \sqrt{m} \rfloor \}.$$
Likewise for~$m > M$. Thus, in either case there are~$\lfloor \sqrt{m} \rfloor+1$ integers~$\ell$ satisfying~$|\ell-m| \leq \sqrt{m}$ and~$R(\ell) \ge R(m)$. Then,
$$ \sum_{ \substack{\ell \ge 0: \\|\ell - m| \leq \sqrt{m} } }  R(\ell) \ge \sqrt{m} \cdot R(m). $$

Now, suppose~$|m-M| \leq \sqrt{m}$. Note that the definition of~$M$ implies~$|k-pM|<p \leq \sqrt{m}$. Then, these two inequalities imply 
\begin{equation}\label{eq_final_1}
|k-pm| \leq |k-pM| + p|m-M| \leq 2 \sqrt{m}
\end{equation}
The upper bound in the local binomial estimate gives
\begin{equation}\label{eq_final_2}
\sqrt{m} \cdot R(m) \leq C_{p,2}.
\end{equation}
Now, let~$\ell \in \{m,\dots,m+\lfloor \sqrt{m} \rfloor\}$. By~\eqref{eq_final_1},
$$ |k-p\ell| \leq |k-pm| + p|\ell-m| \leq 3\sqrt{m} \leq 3 \sqrt{\ell}. $$
Thus the lower bound in the local binomial estimate yields
$$ R(\ell) \ge \frac{c_{p,3}}{\sqrt{\ell}} \ge \frac{c_{p,3}}{\sqrt{2m}} \quad \text{for every} \quad \ell \in \{m,\dots,m+\lfloor \sqrt{m} \rfloor \}. $$
Hence,
$$ \sum_{ \substack{\ell \ge 0: \\|\ell - m| \leq \sqrt{m} } } R(\ell) \ge \sum^{m+\lfloor \sqrt{m} \rfloor}_{\ell = m} R(\ell) \ge \frac{c_{p,3}}{\sqrt{2}} \stackrel{\eqref{eq_final_2}} \ge \frac{c_{p,3}}{\sqrt{2} C_{p,2}} \sqrt{m} \cdot R(m) . $$
Therefore, combining the two cases,~\eqref{ineq_bin_2} holds with~$a_2 = \min\{1,\frac{c_{p,3}}{\sqrt{2}C_{p,2}}\}$. This completes the proof. 
\end{proof}

We are now ready to prove Lemma~\ref{lem:thickness}.
\begin{proof}[Proof of Lemma~\ref{lem:thickness}]
If~$\alpha=1$, irreducibility implies that~$P$ is the deterministic two-cycle (apart from the trivial one-state case), for which the claim is immediate. Hence, we may assume~$\alpha \in (0,1)$ and write
$$ P^2 = \alpha I + (1-\alpha) S, $$
where
$$ S = \frac{P^2-\alpha I}{1-\alpha} $$
is a Markov kernel that commutes with~$P$. Fix~$\epsilon \in \{0,1\}$. For every~$\ell \ge 0$ and~$x,y \in \mathsf S$,
$$ p_{2\ell+\epsilon}(x,y) = \bigl [(P^2)^\ell P^\varepsilon\bigr ](x,y) = \sum_{k=0}^\ell \binom{\ell}{k} \alpha^{\ell-k}(1-\alpha)^k (S^kP^\epsilon)(x,y).$$
Hence, for~$m \ge 1$,
\begin{align*}
\sum_{ \substack{\ell \ge 0: \\|\ell - m| \leq \sqrt{m} } } \Psi(2m+\epsilon,2\ell+\epsilon) \cdot p_{2\ell + \epsilon}(x,y) & \stackrel{\eqref{ineq_bin_1}} \ge \frac{a_1}{\sqrt{m}} \cdot \sum_{ \substack{\ell \ge 0: \\|\ell - m| \leq \sqrt{m} } } \sum^\ell_{k=0} \Pp(\mathrm{Bin}(\ell,1-\alpha)=k ) \cdot (S^kP^\epsilon)(x,y) \\[2mm]
& = \frac{a_1}{\sqrt{m}} \cdot \sum^\infty_{k=0} (S^kP^\epsilon)(x,y) \cdot \sum_{ \substack{\ell \ge 0: \\|\ell - m| \leq \sqrt{m} } }  \Pp(\mathrm{Bin}(\ell,1-\alpha)=k ) \\[2mm]
& \stackrel{\eqref{ineq_bin_2}} \ge a_1 a_2 \cdot \sum^\infty_{k=0} \Pp(\mathrm{Bin}(m,1-\alpha)=k) \cdot (S^kP^\epsilon)(x,y) \\[2mm]
& = a_1a_2 \cdot p_{2m+\epsilon}(x,y),
\end{align*}
where the first equality follows from Tonelli's theorem. We conclude the proof by setting~$a = a_1 a_2$.
\end{proof}

\section{Further time parametrizations}\label{ss_further}
The preceding argument relies essentially on the constant-speed parametrization. More generally, suppose that a continuous-time chain has embedded jump kernel~$P$ and total jump rate~$q(x)>0$ at~$x$. The embedded jump chain of the pair process has transition kernel
\[ \mathsf P_q\big((x,y),(x',y)\big) = \frac{q(x)}{q(x)+q(y)}P(x,x'),
\qquad \mathsf P_q\big((x,y),(x,y')\big) = \frac{q(y)}{q(x)+q(y)}P(y,y'). \]
where the coefficients depend on the current state~$(x,y)$. When~$q$ is constant, these coefficients are both~$1/2$, so the updates of the two coordinates are interlaced according to an independent binomial law. This geometry-independent feature is precisely what is used in the proof of Lemma~\ref{lem:green-comparison}; for nonconstant~$q$, it is no longer available.

There is nevertheless a useful comparison at the level of recurrence. If~$P$ is reversible with respect to~$\pi$ and
\[ 0<\inf_{x\in\mathsf S}q(x)\leq \sup_{x\in\mathsf S}q(x)<\infty. \]
The kernel~$\mathsf P_q$ is reversible with respect to
$$ \pi_q(x,y) := \pi(x) \pi(y) \cdot \frac{ q(x) + q(y) }{q(x)q(y)}.$$
Moreover, the conductances associated with~$\mathsf P_q$ are uniformly comparable with those of the constant-speed pair chain~$\mathsf P_{\rm cont}$. Hence, the two pair chains have the same recurrence/transience classification. In particular, if they are recurrent, then, starting from a diagonal state, both return to that state infinitely often almost surely and therefore yield infinitely many collisions.

A natural example is the variable-speed simple random walk, for which~$q(x)=\deg(x)$. On a connected bounded-degree graph, the preceding recurrence comparison applies. Thus, only the transient case remains beyond the scope of the present argument. A uniform pointwise comparison of the corresponding Green kernels does not appear to be a natural property in this generality. We therefore do not pursue this case here; addressing it seems to require ideas different from the Green-kernel comparison developed in this work.

\subsection*{Acknowledgments}
J.A. was supported by FAPESP (grants 2023/13453-5 and 2025/02707-1); the author is thankful for this support.

\bibliographystyle{alpha}
\bibliography{ref.bib}

\end{document}